      \newtheorem{theorem}{Theorem}
      \newtheorem{corollary}{Corollary}
      \newtheorem{lemma}{Lemma}
      \def\R{{\mathbb R}}
      \def\N{{\mathbb N}}
      \def\cFdd'{\mathcal F_{dd'}}
      \def\ip{int\,P}
      \def\p{\psi}
\begin{document}

\title{An Extended Reich Fixed Point Theorem}

\author[Balasubramanian]{Sriram Balasubramanian}
\address{Department of Mathematics \& Statistics\\
  Indian Institute of Science Education and Research, Kolkata, India.}
\email{bsriram@iiserkol.ac.in}

\subjclass[2000]{47H10, 54H25 (Primary).
37C25(Secondary)}

\keywords{Fixed Point, Kannan, Contraction, Cone Metric Spaces,
Cone Rectangular
Metric Spaces.}

\maketitle




\begin{abstract}
We obtain an extended Reich fixed point theorem for the setting of 
generalized cone rectangular metric spaces without assuming the 
normality of the underlying cone. Our work is a generalization of 
the main result in \cite{AAB} and \cite{JS}.
\end{abstract}


\thispagestyle{empty}

\section{Introduction}
A well-known fixed point theorem for metric spaces is the Banach
contraction mapping theorem, which states that if $(X,d)$ is a complete
metric space and the map $T:X \rightarrow X$ is a contraction, i.e.
$d(Tx,Ty) \le \lambda d(x,y)$ for some $\lambda \in [0,1)$ and
all $x,y \in X$, then there exists a unique fixed point for the map
$T$. In \cite{K}, Kannan considered a self-map $T$ on a complete metric
space $(X,d)$ that satisfies
\[
d(Tx,Ty) \le \alpha\{d(x,Tx) + d(y,Ty)\}
\]
for all $x,y \in X$. He proved that there exists a unique fixed
point for the map $T$, if $\alpha \in [0, 1/2).$ Several successful
attempts have been made to improve the Banach and Kannan fixed point
theorems, mainly along three different directions: (a) by finding
better contractivity conditions on the map $T$,
 (b) by replacing the underlying metric space with a more general space, 
for example - a partial metric space, a generalized metric space, an 
ordered metric space etc., and (c) by relaxing the completeness assumption. 
A small sample of such results can be found in \cite{AR} - \cite{C4}, \cite{R}. One 
such attempt is due to Reich, who in \cite{Re} proved that if $T$ is a self-map 
on a complete metric space $(X,d)$ that satisfies 
\[
d(Tx,Ty) \le \alpha d(x,Tx) +\beta d(y,Ty) + \mu d(x,y),
\]
where $\alpha, \beta, \mu \ge 0$ and $\alpha + \beta + \mu < 1$, then 
$T$ has a unique fixed point. \\

In \cite{HZ}, Huang and Zhang introduced the notion of a (normal)
cone metric, which is more general than a metric, and proved the 
Banach contraction mapping theorem for that setting. This initiated a
series of articles generalizing the Banach, Kannan and other
fixed point theorems to (normal) cone metric spaces. Hamlbarani and 
Rezapur Sh., in \cite{Rh}, further generalized the results of \cite{HZ} 
by dropping the normality assumption of the cone. Along the lines of
 \cite{B4}, Azam et al. in \cite{AAB}, introduced the
notion of a cone rectangular metric, which is obtained by replacing
the triangle inequality in the definition of a cone metric by a rectangular
 inequality, i.e. an inequality that involves four points and proved the
 Banach fixed point theorem for cone rectangular metric spaces.
 Jleli and Samet in \cite{JS} proved the Kannan fixed
point theorem for such spaces. Further generalization to the setting of 
TVS-cone rectangular metric spaces was done by Abdeljawad et al. 
in \cite{AT}. In this article, we obtain an extended Reich fixed point theorem for the 
setting of generalized cone rectangular metric spaces. Being inspired by and using 
the techniques in \cite{Rh}, we also do away with the normality of the 
underlying cone. This work generalizes the results found in \cite{AAB} and \cite{JS}. 


\section{Preliminaries and The Main Result}
\label{sec:prelms}
Let E be a real Banach space. A non-empty closed subset P of E is said
to be a $cone$ if
\begin{itemize}
\item [(a)] $P + P \subset P$
\item [(b)] $\alpha P \subset P$ for all $\alpha \in [0, \infty)$
\item [(c)] $P \cap (-P) = \{0\}$.
\end{itemize}

The cone $P$ is said to be $solid$ if the interior of $P$, which
we will denote by $int\,P$, is non-empty.\\
\newline
\textbf{Examples of Solid Cones:}
\begin{itemize}
\item [(1)] Let $E = \R$ and $P = [0, \infty)$.
\item [(2)] Let $E = \R^2$ and $P = \{(x,y)\,:\,x,y \ge 0\}$.
\end{itemize}

A cone $P$ in a real Banach space E, induces the following partial 
order $\le$ on E. For $x,y \in E$, 
\[
x \le y \, \Leftrightarrow \, y - x \in P.
\]
In the case of a solid cone $P$, we will use the notation
$x \ll y$ to denote $y - x \in int\,P$.\\

A cone $P$ is said to be $normal$ if for all $x, y \in P$ such that 
$x \le y$, there exists a constant $\kappa \ge 1$ such that $\|x\| \le 
\kappa \|y\|$. The examples (1) and (2) above 
are normal cones with $\kappa = 1$. An example of a 
cone that is not normal is the following (See \cite{Rh}). Let $E$ be 
the real Banach space $C^{\prime}[0,1]$ with the norm defined as $\|f\| = 
\|f\|_{\infty } + \|f^{\prime}\|_{\infty}$ and $P$ be the 
cone $\{f \,:\, f \ge 0\}$.\\


Let X be a nonempty set, $E$ be a real Banach space and
$P \subset E$ be a solid cone. A map $d\,:\,X \times X \rightarrow E$
is said to be a {\it{generalized cone rectangular metric}}, if there
exists a least integer $s \ge 1$ such that for all $x,y \in X$ and for all distinct
elements $u,v \in X \setminus \{x,y\}$,
\begin{itemize}
\item [(a)] $d(x,y) \ge 0$, i.e. $d(x,y) \in P$.
\item [(b)] $d(x,y) = 0$ if and only if $y = x$.
\item [(c)] $d(x,y) \le d(x,u) + s d(u,v) + d(v,y)$.
\end{itemize}
The pair $(X,d)$ is called a {\it{generalized cone rectangular metric
space.}} The quantity $s$ is called a weight of $(X,d)$.\\\\
\textbf{Example:} Let $X = \{1,2,3,4\}$, $E$ and $P$ be as in Example (2)
above. Define $d\,:\,X \times X \rightarrow E$ by
\[
d(1,2) = d(2,1) = (4,6);
\]
\[
d(1,3) = d(1,4) = d(2,3) = d(2,4) = d(3,4) = (1,1);
\]
\[
d(k,k) = 0 \text{ and } d(i,j) = d(j,i); \,\,\, i, j, k \in X.
\]
$(X,d)$ is a generalized cone
rectangular metric space with $s \le 4$.\\
\newline
\textbf{Remark 2:} Letting $s = 1$ in the definition
of the generalized cone rectangular metric $d$ yields the
definition of a cone rectangular metric, which was introduced by
Azam et al. in \cite{AAB}. Thus the collection of all generalized cone
rectangular metric spaces includes all the cone rectangular metric
spaces. Moreover, the inclusion is strict. Observe that 
$d(1,2) \not\le d(1,3) + d(3,4) + d(4,2).$\\

The following is our main result, which concerns
 a {\it{weakly complete generalized cone rectangular metric space}}.
 i.e. a space where every Cauchy sequence $\{x_n\}$ converges to
some $x$ in the space, in a weak sense namely,
given a neighborhood $U$ of $x$ {\it{of a certain type}}, there
exists a natural number $N$ such that $x_n \in U$ for all $n \ge N$.
Please see Section \ref{sec:gcrms} for the actual definition of
{\it{weak convergence.}}

\begin{theorem}
\label{thm:mainresult}
Let $(X,d)$ be a weakly complete generalized cone rectangular metric space 
with weight $s \ge 1$, $T:X \rightarrow X$ be a map. If for every $x,y \in X$, 
\begin{equation}
\label{eq:condnond}
d(Tx,Ty) \le \mu d(x,y) + \alpha d(x,Tx) + \beta d(y,Ty) + \gamma d(y,Tx),
\end{equation}
where $\mu, \alpha, \beta, \gamma \ge 0$, $0 \le \mu + \alpha + \beta < 1$, $\mu < \frac1s$ and $\mu + \gamma \in [0,1)$,
then $T$ has a unique fixed point.
\end{theorem}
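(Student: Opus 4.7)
The plan is to iterate the Picard map, derive geometric decay of one-step and two-step distances in terms of $\p$, and use the rectangular inequality to stitch these into a Cauchy estimate. Fix $x_0 \in X$ and set $x_{n+1} = T x_n$. If $x_n = x_{n+1}$ for some $n$ we are done, so assume otherwise. Substituting $x = x_{n-1}$, $y = x_n$ into (\ref{eq:condnond}) kills the $\gamma$-term because $d(x_n, T x_{n-1}) = 0$ and $\p(0) = 0$ (the latter being a standard consequence of applying property (iv) to the zero sequence). Rearranging,
\[
\p(d(x_n, x_{n+1})) \le \lambda\, \p(d(x_{n-1}, x_n)), \qquad \lambda := \frac{\mu + \alpha}{1 - \beta} \in [0, 1),
\]
so $\p(d(x_n, x_{n+1})) \le \lambda^n \p(d(x_0, x_1))$. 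Property (iii), in the form $\p(\lambda^n d(x_0, x_1)) \ge \lambda^n \p(d(x_0, x_1))$, together with the backward direction of (ii), transfers this to the pointwise bound $d(x_n, x_{n+1}) \le \lambda^n d(x_0, x_1)$. This rules out any repetition $x_n = x_m$ with $n < m$: such a repetition forces eventual periodicity and a constant positive value for $d(x_j, x_{j+1})$, contradicting the decay. Hence all iterates are distinct, as the rectangular inequality will require.

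For the two-step distances, substituting $x = x_{n-1}$, $y = x_{n+1}$ into (\ref{eq:condnond}) yields, with $a_n := \p(d(x_n, x_{n+1}))$ and $b_n := \p(d(x_n, x_{n+2}))$,
\[
b_n \le \mu\, b_{n-1} + \alpha\, a_{n-1} + \beta\, a_{n+1} + \gamma\, a_n.
\]
Since $\mu < 1$ and $a_n$ decays geometrically at rate $\lambda$, unrolling this first-order linear recurrence shows $b_n$ decays at rate $\max(\mu, \lambda) < 1$, and (iii) again transfers this to $d(x_n, x_{n+2})$.

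For Cauchyness, split $m - n$ by parity. For $m - n = 2k$ even, iterating the rectangular inequality through consecutive iterates $k - 1$ times gives
\[
d(x_n, x_m) \le \sum_{i=0}^{k-2} \bigl[ d(x_{n+2i}, x_{n+2i+1}) + s\, d(x_{n+2i+1}, x_{n+2i+2}) \bigr] + d(x_{m-2}, x_m),
\]
and by the previous two steps this is a geometric tail in $n$ plus a geometrically small two-step term; the odd case ends in a one-step term. Alternatively, writing $d(x_n, x_m) \le d(x_n, x_{n+1}) + s\, d(x_{n+1}, x_{m-1}) + d(x_{m-1}, x_m)$ and applying the contraction to $d(x_{n+1}, x_{m-1}) = d(T x_n, T x_{m-2})$ produces a recursive bound with coefficient $s\mu$ on $\p(d(x_n, x_{m-2}))$, and $\mu < 1/s$ is precisely what makes this iteration converge. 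In either approach, $\{x_n\}$ is weakly Cauchy, so weak completeness yields $x^* \in X$ with $x_n \to x^*$ weakly. Applying (\ref{eq:condnond}) with $x = x_n$, $y = x^*$ and letting $n \to \infty$, property (iv) sends every right-hand term except $\beta\, \p(d(x^*, T x^*))$ to something $\ll c$ for arbitrary $c \gg 0$; hence $(1 - \beta)\, \p(d(x^*, T x^*)) \ll c$ for every $c \gg 0$, forcing $\p(d(x^*, T x^*)) = 0$ and $T x^* = x^*$ by injectivity of $\p$ (a direct consequence of the iff in (ii)). Uniqueness: for a second fixed point $y^*$, (\ref{eq:condnond}) collapses to $\p(d(x^*, y^*)) \le (\mu + \gamma)\, \p(d(x^*, y^*))$, and $\mu + \gamma < 1$ forces $\p(d(x^*, y^*)) = 0$, whence $x^* = y^*$.

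\textbf{Main obstacle.} The delicate part is the Cauchy step: transferring $\p$-estimates back to $d$-estimates uses only monotonicity (ii) and sub-homogeneity (iii) of $\p$, which passes cleanly for a single scalar multiple of a fixed cone element but is awkward for mixed sums such as those arising in the $b_n$ recurrence; and the weight $s$ injected by each rectangular application must be balanced against the contractive rates, which is precisely why the hypothesis $\mu < 1/s$ is needed (alongside $\mu + \alpha + \beta < 1$ for $\lambda < 1$ and $\mu + \gamma < 1$ for uniqueness).
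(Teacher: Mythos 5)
Your overall architecture (Picard iteration, geometric decay of $\p(d(x_n,x_{n+1}))$ via $\lambda=\frac{\mu+\alpha}{1-\beta}$, parity split for Cauchyness, weak limit, same uniqueness argument) matches the paper, but two steps do not close as written. First, the Cauchy step. Your two-step recurrence $b_n \le \mu b_{n-1} + \alpha a_{n-1} + \beta a_{n+1} + \gamma a_n$ unrolls to a bound of the form $b_n \le \mu^{n-1}\p(d(x_1,x_3)) + \theta_n\,\p(d(x_0,x_1))$, i.e.\ a combination of \emph{two} incomparable reference elements of the cone. The transfer back from $\p$-estimates to $d$-estimates rests on the chain $\eta\,\p(u) \le \p(\eta u)$ (property (iii)) followed by the ``if and only if'' in (ii); this works for a scalar multiple of a \emph{single} element but cannot be performed on a sum $\theta\p(u)+\theta'\p(v)$, because subadditivity gives $\p(u)+\p(v)\ge\p(u+v)$, the wrong direction for recombining into a single $\p(\cdot)$. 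You flag this as ``awkward'' but do not resolve it, and your fallback route --- recursing on $d(x_{n+1},x_{m-1})=d(Tx_n,Tx_{m-2})$ with coefficient $s\mu$ --- produces an uncontrolled long-range term $\gamma\,\p(d(x_{m-2},x_{n+1}))$ whenever $\gamma>0$. The paper avoids both problems by a different $k=2$ estimate: it applies (\ref{eq:condnond}) with $x=T^{n-1}x$, $y=T^{n+1}x$ (so the $\gamma$-term is the \emph{one-step} distance $d(T^{n+1}x,T^nx)$) and expands only the $\mu$-term by the rectangular inequality, obtaining a \emph{self-referential} bound with coefficient $\mu s<1$ that is absorbed to the left; this keeps every estimate as a scalar multiple of the single element $\p(d(x,Tx))$, which is exactly what makes the $\p\to d$ transfer legal. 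This is also where the hypothesis $\mu<\frac1s$ is actually used.

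Second, the fixed-point step. Taking $x=x_n$, $y=x^*$ in (\ref{eq:condnond}) bounds $\p(d(x_{n+1},Tx^*))$, not $\p(d(x^*,Tx^*))$, so you cannot ``let $n\to\infty$'' and isolate $(1-\beta)\p(d(x^*,Tx^*))$ without first bridging via $d(x^*,Tx^*) \le d(x^*,x_N) + s\,d(x_N,x_{N+1}) + d(x_{N+1},Tx^*)$. That rectangular inequality is only available when $x_N$ and $x_{N+1}$ are distinct points outside $\{x^*,Tx^*\}$, which is why the paper runs a separate case analysis showing $x^*\ne T^mx_0$ and $Tx^*\ne T^mx_0$ for all $m$ before concluding. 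Your sketch omits both the bridge and the case analysis; in a metric space the triangle inequality would make this immediate, but in a (generalized cone) rectangular metric space it is a genuine step. The remaining parts --- uniqueness, the one-step decay, and the distinctness of the orbit (which the paper gets from $\p(d(T^nx,T^{n+1}x))\le\delta^{m-n}\p(d(T^nx,T^{n+1}x))$ and Lemma \ref{lem:coneelemprops}(iii) rather than your limiting argument, though yours can be repaired using closedness of $P$) --- are sound.
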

The proof of the above Theorem is given in Section \ref{sec:main}. Choosing 
$s = 1$, the main result of \cite{AAB} can be recovered by taking $\alpha = \beta = \gamma = 0$, and the main result of \cite{JS} can be recovered by taking $\alpha = \beta$ and $\mu = \gamma = 0$. Also, here we do not assume the solid cone $P$ to be normal, as was the case in \cite{JS}. The absence of the normality assumption on the cone is handled by methods given in \cite{Rh}.

\section{Some Properties of a Cone in a Real Banach Space}
Recall the definition of a cone in a real Banach space given in
Section \ref{sec:prelms}.
The following are some well-known properties of a cone $P$
in a real Banach space $E$. A few more of them can be found
in \cite{HZ}, \cite{JKR}. 
\begin{lemma}
\label{lem:coneelemprops}
Let $E$ be a real Banach space and $P$ be a cone in $E$.
\begin{itemize}
\item [(i)] $P + int\,P \subset int\,P$
\item [(ii)] $\alpha \, (int\,P) \subset int\,P$ for all
$\alpha \in [0, \infty)$.
\item [(iii)] If $u \in P$ and $u \le ku$ for some $k \in [0,1)$,
then $u = 0$.
\end{itemize}
%
%
\end{lemma}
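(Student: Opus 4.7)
The plan is to verify each of the three items directly from the cone axioms (a)--(c) and elementary topology in the Banach space $E$; none of the three requires machinery beyond this. For (i), I would take $p \in P$ and $q \in int\,P$, pick $r > 0$ with $B(q,r) \subset P$, and argue that the translated ball $B(p+q,r)$ lies in $P$: if $\|z - (p+q)\| < r$, then $z-p \in B(q,r) \subset P$, whence $z = p + (z-p) \in P + P \subset P$ by axiom (a). This witnesses $p + q \in int\,P$.

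For (ii), the analogous argument uses rescaling rather than translation. For $\alpha > 0$ and $q \in int\,P$ with $B(q,r) \subset P$, I would show $B(\alpha q, \alpha r) \subset P$: if $z$ lies in that ball, then $\|z/\alpha - q\| < r$, so $z/\alpha \in P$ and hence $z = \alpha(z/\alpha) \in \alpha P \subset P$ by axiom (b). The $\alpha = 0$ boundary is slightly awkward since $0$ is not an interior point of any nondegenerate solid cone (otherwise a small symmetric ball would lie in $P \cap (-P) = \{0\}$), so I would treat the statement as effectively restricted to $\alpha > 0$ and flag the edge case.

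For (iii), the argument is purely algebraic. Rearranging $u \le ku$ as $ku - u \in P$ gives $(1-k)u \in -P$; meanwhile, $u \in P$ together with $1-k \in [0,1]$ and axiom (b) gives $(1-k)u \in P$. Axiom (c) then yields $(1-k)u \in P \cap (-P) = \{0\}$, and since $1-k > 0$ we conclude $u = 0$. I do not anticipate any substantive obstacle in any of the three parts; the only point of delicate care is the $\alpha = 0$ edge case in (ii), and beyond that the proofs amount to one unwrapping of axioms each.
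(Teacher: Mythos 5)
Your three arguments are correct and are the standard ones; the paper itself offers no proof of this lemma, simply recording the properties as well known (with a pointer to \cite{HZ} and \cite{JKR}), so there is no authorial argument to compare against. Your observation about $\alpha = 0$ in part (ii) is a legitimate catch: if $int\,P \neq \emptyset$ and $E \neq \{0\}$, then $0 \notin int\,P$ (a symmetric ball about $0$ inside $P$ would violate $P \cap (-P) = \{0\}$), so the inclusion as literally stated fails at $\alpha = 0$ and should be read as restricted to $\alpha > 0$; this restriction is harmless for every use of the lemma in the paper.
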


\begin{lemma}
\label{lem:coneprops}
Let $E$ be a real Banach space, $P$ be a solid cone in $E$
and $u,v,w \in E$.
\begin{itemize}
\item [(i)] If $0 \ll v \le u$, then $0 \ll u$. i.e. $u \in \ip$.
\item [(ii)] If $u \le v$ and $v \ll w$, then $u \ll w$.
\item [(iii)] If $0 \le u \ll c$ for all $c \gg 0$, then $u = 0$.
\item [(iv)] If $(x_n)$ is a sequence in $E$ such that $x_n \ge 0$
and $x_n \rightarrow 0$, then given $c \gg 0$, there exists
$N \in \N$ such that $x_n \ll c$ for all $n \ge N$.
\end{itemize}
\end{lemma}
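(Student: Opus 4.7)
\textbf{Proof plan for Lemma \ref{lem:coneprops}.}

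For parts (i) and (ii), the plan is to reduce everything to part (i) of Lemma \ref{lem:coneelemprops}, which asserts $P + \ip \subset \ip$. For (i), I would write the decomposition $u = v + (u - v)$; since $0 \ll v$ places $v$ in $\ip$ and $v \le u$ places $u - v$ in $P$, the sum lies in $\ip + P \subset \ip$, so $0 \ll u$. For (ii), the identical maneuver applied to $w - u = (w - v) + (v - u)$ puts the difference in $\ip + P \subset \ip$, giving $u \ll w$. Both parts are essentially one-line computations and present no obstacle.

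For part (iii), my plan is a closure argument against an artificially constructed null sequence. Since $P$ is solid I can fix once and for all some $c_0 \gg 0$. By Lemma \ref{lem:coneelemprops}(ii), each scalar multiple $c_0/n$ remains in $\ip$, i.e.\ $c_0/n \gg 0$ for every $n \in \N$. Applying the hypothesis with $c = c_0/n$ yields $u \ll c_0/n$, hence $c_0/n - u \in \ip \subset P$. Letting $n \to \infty$, the sequence $c_0/n - u$ converges in norm to $-u$, and because $P$ is closed (part of the definition of a cone) the limit lies in $P$. Combined with $u \in P$ (from $0 \le u$) and the defining property $P \cap (-P) = \{0\}$, this forces $u = 0$.

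For part (iv), the plan is to exploit that $c \in \ip$ is genuinely an interior point of $P$ in the norm topology. Pick $\delta > 0$ such that the open norm-ball $B(c,\delta)$ is contained in $P$; since $B(c,\delta)$ is itself an open subset of $E$ lying in $P$, every one of its points is interior to $P$, so in fact $B(c,\delta) \subset \ip$. The hypothesis $x_n \to 0$ supplies an $N$ with $\|x_n\| < \delta$ for $n \ge N$, and then $c - x_n \in B(c,\delta) \subset \ip$, i.e.\ $x_n \ll c$.

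I do not anticipate a genuine obstacle in any of the four parts, each being a short topological or order-theoretic observation built from the cone axioms together with Lemma \ref{lem:coneelemprops}. The only point requiring care is in (iii), where one must resist the temptation to apply the hypothesis at $c = u/n$ (which need not lie in $\ip$) and instead pre-select a fixed interior element $c_0$ to generate a legitimate sequence of test vectors.
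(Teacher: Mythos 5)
Your proposal is correct. The paper itself gives no proof of Lemma \ref{lem:coneprops}, presenting the four assertions as well-known properties and pointing to \cite{HZ} and \cite{JKR}; your arguments are exactly the standard ones those references use: parts (i) and (ii) via $P + \ip \subset \ip$, part (iii) via the closedness of $P$ applied to the null sequence $c_0/n - u$ together with $P \cap (-P) = \{0\}$, and part (iv) via a norm ball $B(c,\delta) \subset \ip$. Your cautionary remark about not testing against $u/n$ in (iii) is well taken, and the observation that an open ball contained in $P$ is automatically contained in $\ip$ correctly justifies the last step of (iv).
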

%
%
%
%

\section{More on generalized cone rectangular metric spaces}
\label{sec:gcrms}
Recall the definition of a generalized cone rectangular metric
space given in Section \ref{sec:prelms}. For $x \in X$ and
$c \gg 0$, define $B(x,c) = \{y : d(x,y) \ll c\} \subset X$.
The collection $\mathcal B = \{B(x,c) : x \in X, c \gg 0\}$
being a subbasis generates a topology on X, say $\Gamma$.
Note that $\Gamma$ consists of all unions of finite intersections
of elements of $\mathcal B$. In particular $\mathcal B \subset
\Gamma$. We will henceforth view $(X, \Gamma)$ as a
topological space. That whether the topological space $(X,\Gamma)$
is Hausdorff, remains to be seen. The following definitions are adapted 
from \cite{AAB}, \cite{HZ}, \cite{JS}.\\

A sequence $(x_n)$ in a generalized cone rectangular metric space X is
said to be {\it Cauchy}, if given $c \gg 0$, there exists $N \in \N$,
which is independent of $k$, such that $d(x_n,x_{n+k}) \ll c$ whenever
$n \ge N$.\\

A sequence $(x_n)$ in a generalized cone rectangular metric space X is
said to {\it converge weakly} to $x \in X$, if given $c \gg 0$, there exists
an $N \in \N$ such that $d(x_n,x) \ll c$ for all $n \ge N$.
i.e. $x_n \in B(x,c)$ for all $n \ge N$. We will denote $(x_n)$ converging
 weakly to $x$ by $x_n \twoheadrightarrow x$. \\

A generalized cone rectangular metric space $X$ is said to be {\it weakly
complete} if every Cauchy sequence in the space converges weakly. \\
\newline
\textbf{Example:} Let $(X,d)$ be the generalized cone rectangular
metric space given in the Example in Section \ref{sec:prelms}.
It can easily be verified that the only Cauchy sequences in $X$ are
the eventually constant sequences, which of course converge weakly
in $X$. Thus $(X,d)$ is a complete generalized cone rectangular metric
space.\\
\newline
\textbf{Remark 4:} For a general topological space $Y$, one says that
a sequence $(x_n)$ in Y converges to $x$ if and only if given
any open set $U$ containing $x$, there exists an $N \in\N$ such that
$x_n \in U$, for all $n \ge N$. Observe that for our purposes, we
only consider a weaker form of convergence. The type of convergence
defined above is weaker, because we demand the existence of an
$N \in \N$ not for all, but only for certain open sets containing
$x$, namely, sets of the form $B(x,c)$.\\

The following lemma is a minor variant of Lemma 1.10 from \cite{JS}.
\begin{lemma}
\label{lem:samelim}
Let $(x_n)$ be a Cauchy sequence of distinct points in a generalized
cone rectangular metric space $X$. Let $x, y \in X$ and $M \in \N$ be such that 
$x,y \not \in \{x_n:n \ge M\}$. If $(x_n)$ converges weakly to both 
$x$ and $y$, then $x = y$.
\end{lemma}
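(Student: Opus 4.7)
The plan is to apply the rectangular inequality (c) to the four points $x, x_n, x_{n+1}, y$ and estimate each of the three summands on the right: the outer two via the weak convergence hypotheses, and the middle one via the Cauchy property. Concluding that $d(x,y) \ll c$ for every $c \gg 0$ will then force $d(x,y) = 0$ by Lemma \ref{lem:coneprops}(iii).

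First I fix an arbitrary $c \gg 0$. By Lemma \ref{lem:coneelemprops}(ii) both $c/3$ and $c/(3s)$ lie in $\ip$. Since the $x_n$ are pairwise distinct and neither $x$ nor $y$ belongs to $\{x_n : n \in \N\}$, for every $n$ the points $u := x_n$ and $v := x_{n+1}$ are two distinct elements of $X \setminus \{x,y\}$, so the rectangular inequality is applicable and yields
\[
d(x,y) \le d(x,x_n) + s\,d(x_n,x_{n+1}) + d(x_{n+1},y).
\]

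Next, from $x_n \twoheadrightarrow x$ and $x_n \twoheadrightarrow y$ (the latter applied along the shifted index $n+1$) I pick $N_1$ such that $d(x,x_n) \ll c/3$ and $d(x_{n+1},y) \ll c/3$ for all $n \ge N_1$; the Cauchy property with shift $k=1$ furnishes $N_2$ such that $d(x_n,x_{n+1}) \ll c/(3s)$ for all $n \ge N_2$, and Lemma \ref{lem:coneelemprops}(ii) then gives $s\,d(x_n,x_{n+1}) \ll c/3$. Adding these three estimates, and using repeatedly that $P + \ip \subset \ip$ from Lemma \ref{lem:coneelemprops}(i), I obtain $d(x,y) \ll c$. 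As $c \gg 0$ was arbitrary, Lemma \ref{lem:coneprops}(iii) forces $d(x,y) = 0$, and property (b) of the generalized cone rectangular metric delivers $x = y$.

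The only potential obstacle is to ensure the rectangular inequality is actually applicable at each $n$, which requires producing two distinct points of $X \setminus \{x,y\}$; this is exactly what the distinctness of the $x_n$ and the hypothesis $x,y \notin \{x_n : n \in \N\}$ supply, so the choice $u = x_n$, $v = x_{n+1}$ is legitimate for every $n$ and no further index bookkeeping is needed.
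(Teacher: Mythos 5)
Your proposal is correct and follows essentially the same route as the paper: apply the rectangular inequality to $x, x_n, x_{n+1}, y$, bound the three terms by $c/3$ each using weak convergence and the Cauchy property, and conclude via Lemma \ref{lem:coneprops}(iii). Your explicit verification that $x_n, x_{n+1}$ are admissible points for the rectangular inequality is a detail the paper leaves implicit, but the argument is the same.
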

\begin{proof}
Given $c \gg 0$, by hypothesis, choose $N \ge M$ such that 
such that $d(x,x_n) \ll \frac{c}{3}, \, \,
d(x_m,x_{m+1}) \ll \frac{c}{3s}$ and $d(x_{\ell},y) \ll \frac{c}{3}$, for all $m,n, \ell \ge N$.
Using part (i) of Lemma \ref{lem:coneelemprops} and part (ii) of
Lemma \ref{lem:coneprops}, it follows that
\[
d(x,y) \le d(x,x_N) + sd(x_N,x_{N+1}) + d(x_{N+1},y)\ll  c.
\]
An application of part (iii) of Lemma \ref{lem:coneprops}, completes
the proof.
\end{proof}


\section{Proof of The Main Result}
\label{sec:main}
This section contains a proof of our main result stated in
Section \ref{sec:prelms}. A part of it is adapted from      
Theorem 2.1 in \cite{JS} and Theorem 1 of \cite{MSS}. 


\begin{proof}[Proof of Theorem \ref{thm:mainresult}]
First we prove uniqueness of the fixed point. Suppose that $x, y \in X$ are
fixed points of $T$. From inequality (\ref{eq:condnond}), it follows that
\begin{align*}
d(x,y) &\le \mu d(x,y) + \alpha d(x,Tx) + \beta d(y,Ty) + \gamma d(y, Tx) \\
&= (\mu + \gamma) d(y,x).
\end{align*}
By part (iii) of Lemma \ref{lem:coneelemprops}, it follows that
$d(x,y) = 0$. Thus $x = y$.

Let $\delta = \frac{\mu + \alpha} {1-\beta}$. Note that
$\delta \in [0,1)$. Fix $x \in X$. For $n \in \N$, 
using inequality (\ref{eq:condnond}), we get
\[
d(T^nx, T^{n+1}x) \le (\mu + \alpha) d(T^{n-1}x, T^nx) +
\beta d(T^nx,T^{n+1}x).
\]
Thus for each $n \in \N$,
\[
d(T^nx, T^{n+1}x) \le \delta d(T^{n-1}x, T^{n}x).
\]
Iterating we get,

\begin{equation}
\label{eq:condnont}
d(T^nx, T^{n+1}x)\le \delta^n d(x,Tx).
\end{equation}

Without loss of generality, one can assume that the sequence $(T^nx)$ consists
of distinct elements. This is because if $T^mx = T^nx$ for some $m > n$, then
\begin{align*}
d(T^nx, T^{n+1}x) & = d(T^{m-n}(T^nx), T^{m-n+1}(T^nx))\\
                  & \le \delta^{m-n} d(T^nx, T^{n+1}x).
\end{align*}
From part (iii) of Lemma \ref{lem:coneelemprops}, it follows that
$d(T^nx, T^{n+1}x) = 0$, i.e. $T^nx$ is a fixed point of $T$.
Henceforth, we will assume $T^nx \neq T^mx$ for any $n \neq m$.

Next we prove that the sequence $(T^nx)$ is a Cauchy sequence
in $X$. For $k \in \N$, consider $d(T^nx, T^{n+k}x)$.

Suppose that $k$ is odd. We have,
\begin{align}
\notag
d(T^nx, T^{n+k}x) &\le \{d(T^nx,T^{n+1}x) + s \, d(T^{n+1}x,T^{n+2}x)
+ d(T^{n+2}x, T^{n+3}x) \\
\notag
                  & + s \,d(T^{n+3}x,T^{n+4}x) + \cdots + d(T^{n+k-1}x, T^{n+k}x)\}\\
\notag
& \le s\, \{d(T^nx,T^{n+1}x) + d(T^{n+1}x,T^{n+2}x) + d(T^{n+2}x, T^{n+3}x) \\
\notag
&+ d(T^{n+3}x,T^{n+4}x) + \cdots + d(T^{n+k-1}x,
T^{n+k}x)\} \\
& \le s(\delta^n + \delta^{n+1} + \cdots + \delta^{n+k-1})\, d(x,Tx)\\
\label{eq:kisodd}
& \le \frac {s \delta^{n}}{1 - \delta} \, d(x,Tx).
\end{align}

Suppose that $k = 2$. Using inequality (\ref{eq:condnond}), the
facts $0 \le \alpha, \beta, \gamma \le 1$ and $0 \le \mu < \frac1s$, we get
\begin{align}
\notag
d(T^nx,T^{n+2}x) & \le \{\mu d(T^{n-1}x, T^{n+1}x) + \alpha d(T^{n-1}x,T^nx)\\
\notag                 
&     + \beta d(T^{n+1}x,T^{n+2}x) + \gamma d(T^{n+1}x,T^nx)\}\\
                 \notag
&\le \{\mu[d(T^{n-1}x, T^nx) + s d(T^nx, T^{n+2}x) \\
\notag
&  + d(T^{n+2}x, T^{n+1}x)] + \alpha d(T^{n-1}x,T^nx) \\
\notag 
& + \beta d(T^{n+1}x,T^{n+2}x) + \gamma d(T^{n+1}x,T^nx)\}\\
\notag
& \le \{(\mu + \alpha) d(T^{n-1}x,T^nx) + (\mu + \beta)  d(T^{n+2}x, T^{n+1}x) \\
\notag
& + \gamma d(T^{n+1}x,T^nx) + \mu s d(T^nx, T^{n+2}x)\}\\
\notag
& \le \{(\mu + \alpha) \delta^{n-1} + (\mu + \beta) \delta^{n+1} + \gamma \delta^n\} d(x,Tx) \\
\notag
&+ \mu s d(T^nx, T^{n+2}x).\\
\notag
& \le  5 \delta^{n-1} d(x,Tx) + \mu s d(T^nx, T^{n+2}x).
\end{align}
Thus,
\begin{align}
\label{eq:kis2}
d(T^nx,T^{n+2}x) \le \frac{5 \delta^{n-1}}{1-\mu s} d(x,Tx).
\end{align}

Suppose that $k > 2 $ is even. We have 
\begin{align}
\notag
d(T^nx, T^{n+k}x) &\le \{d(T^nx,T^{n+2}x) + s \, d(T^{n+2}x,T^{n+3}x)
+ d(T^{n+3}x, T^{n+4}x) \\
\notag
                  & + s\, d(T^{n+4}x,T^{n+5}x) + \cdots + d(T^{n+k-1}x, T^{n+k}x)\}\\
\notag
& \le d(T^nx,T^{n+2}x) + s \{ d(T^{n+2}x,T^{n+3}x) + d(T^{n+3}x, T^{n+4}x) \\
\notag
&+ d(T^{n+4}x,T^{n+5}x) + \cdots + d(T^{n+k-1}x,
T^{n+k}x)\} \\
\notag
& \le \frac{5 \delta^{n-1}}{1-\mu s} d(x,Tx)+ s \{\delta^{n+2} + \cdots + \delta^{n+k-1}\}\, d(x,Tx)\\
\label{eq:kiseven}
& \le \left(\frac{5 \delta^{n-1} }{1-\mu s} + \frac {s \delta^{n}}{1 - \delta} \right)\, d(x,Tx).
\end{align}


It follows from
inequalities (\ref{eq:kisodd}), (\ref{eq:kis2}) and (\ref{eq:kiseven}) that
\begin{align}
\label{eq:firststage}
d(T^nx, T^{n+k}x) \le \left(\frac{5 \delta^{n-1} }{1-\mu s} + \frac {s \delta^{n}}{1 - \delta} \right)
d(x,Tx) \,\,\,\text{ for all } n, \,k \in \N.
\end{align}
Let $y_n = \left(\frac{5  \delta^{n-1} }{1-\mu s} + \frac {s \delta^{n}}{1 - \delta} \right) d(x,Tx) \in P$. For each $k \in \N$, it follows that 
\begin{align}
\label{eq:cauchy}
d(T^nx, T^{n+k}x) \le y_n.
\end{align}

Since the 
sequence $(y_n)$ in $P$ converges to $0$, for a given $c \gg 0$, using part (iv) of Lemma \ref{lem:coneprops}, one can choose a natural number $K$ such that 
$y_n \ll c$ for all $n \ge K$. Thus for each $k \in \N$ and $n \ge K$, we get 
\[
d(x_n,x_{n+k}) \le y_n \ll c.
\]
An application of part (ii) of Lemma \ref{lem:coneprops} implies that the sequence 
$(T^nx)$ is Cauchy. Since $X$ is weakly complete, there exists $u \in X$ such that $T^nx \twoheadrightarrow u$.
Note that the uniqueness of the limit $u$ of the sequence $(T^nx)$ is
guaranteed by Lemma \ref{lem:samelim}. Our next claim is that $u$ is
a fixed point of $T$.\\

Recall that $(T^nx)$ is a sequence in $X$ of distinct points. Choose $m 
\in \N$ such that $u, Tu \not \in \{T^kx\,:\, k \ge m\}$. Using inequality (\ref{eq:firststage}) together with part (iv) of Lemma \ref{lem:coneprops}, it follows that
for every $c \gg 0$, there exists $N \ge m$ such that 
\begin{align}
\label{eq:ineqs}
d(u,T^nx) \ll \frac{(1 - \beta)c}{6} \,\,\,\, \text{ and }\, \, \,\,
d(T^{\ell}x, T^{\ell+1}x) \ll \frac{(1 - \beta)c}{6s}.
\end{align}
 for all $n, \ell \ge N$. We have,
\begin{align}
\notag
d(u,Tu) & \le d(u,T^{N}x) + s d(T^{N}x,T^{N+1}x) + d(T^{N+1}x, Tu)\\
\notag
        & \le \{d(u,T^{N}x) + s d(T^{N}x,T^{N+1}x)  + \mu d(T^Nx,u) \\
\notag
        & + \alpha d(T^{N}x,T^{N+1}x)  + \beta d(u.Tu) + \gamma d(u, T^{N+1}x)\}\\
\notag
        & \le \{2d(u,T^Nx) + 2sd(T^{N}x,T^{N+1}x) + \beta d(u,Tu) \\
\label{eq:fixedpt}
       &+ 2 d(u,T^{N+1}x)\}.
\end{align}
Thus,
\begin{align*}
d(u,Tu) &\le \frac{2}{1 - \beta} \{d(u,T^Nx) + d(u,T^{N+1}x)\}  \\
& + \frac{2s}{1 - \beta} d(T^{N}x,T^{N+1}x)\\
         &\ll \, c.
\end{align*}
By part (iii) of Lemma \ref{lem:coneprops}, it follows that $\p(d(u,Tu)) = 0$. i.e. $u$ is a fixed point of $T$.
%
%
\end{proof}

\begin{corollary}
Let $E = \R$, $P = [0, \infty)$. If $(X,d)$ is a generalized cone rectangular metric space with weight $s \ge 1$ and $T:X \rightarrow X$ satsifies 
$d(Tx,Ty) \le \frac{1}{2} \{ d(Tx,x)  + d(Ty,y) \},$, then $T$ has a unique fixed point.
\end{corollary}
\begin{proof}
Taking $\mu = \gamma = 0$, $\alpha = \beta = \frac{1}{2}$ in Theorem \ref{thm:mainresult} yields the desired result.
\end{proof}

\begin{corollary}
Let $E = \R$, $P = [0, \infty)$. If $(X,d)$ is a generalized cone rectangular metric space with weight $s \ge 1$ and $T:X \rightarrow X$ satsifies 
$d(Tx,Ty) \le \mu d(x,y)$, where where $0 \le \mu < 1$, then $T$ has a unique fixed point.
\end{corollary}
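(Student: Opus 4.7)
The plan is to deduce this corollary as a direct specialization of Theorem \ref{thm:mainresult}, taking $\alpha = \beta = \gamma = 0$, keeping the $\mu$ of the hypothesis, and choosing the weak distance altering function $\p : P \to P$ to be $\p(a) = a^t$. With these choices, the abstract contractivity condition (\ref{eq:condnond}) collapses exactly to the inequality $(d(Tx,Ty))^t \le \mu (d(x,y))^t$ assumed here, so the bulk of the work is simply to set up and verify the hypotheses of the theorem.

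Before invoking the theorem, I would check that $\p(a) = a^t$ on $P = [0,\infty)$ really is a weak distance altering function (at least for $t \in (0,1]$). Subadditivity (i) is the classical inequality $(a+b)^t \le a^t + b^t$; monotonicity (ii) is the standard fact that $a \mapsto a^t$ is strictly increasing on $[0,\infty)$; property (iii), $\p(\eta a) \ge \eta \p(a)$ for $\eta \in [0,1]$, reduces to $\eta^t \ge \eta$, which holds because $t \le 1$; and property (iv) becomes, since $u \ll c$ in $\R$ is just $u < c$, the ordinary one-sided continuity of $a \mapsto a^t$ at $0$ combined with Lemma \ref{lem:coneprops}(iv). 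The degenerate case $t = 0$ would need separate comment, since then $\p$ is not injective, but for $t \in (0,1]$ all four conditions are routine.

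It then remains to verify the numerical side conditions of Theorem \ref{thm:mainresult}. With $\alpha = \beta = \gamma = 0$, the requirements $\mu + \alpha + \beta < 1$ and $\mu + \gamma < 1$ both reduce to $\mu < 1$, which is given. The only genuine obstacle is the remaining constraint $\mu < 1/s$: this is automatic when $s = 1$, and for $s > 1$ one either reads the corollary as implicitly carrying the sharper condition $\mu s < 1$, or passes first to an iterate $T^N$ with $N$ large enough that $\mu^N < 1/s$, applies Theorem \ref{thm:mainresult} to obtain a unique fixed point $u$ of $T^N$, and then observes that $Tu$ is also a fixed point of $T^N$ and hence equals $u$ by uniqueness. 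Either way, the theorem produces the unique fixed point of $T$ and the corollary follows.
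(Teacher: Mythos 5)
Your proposal follows the same basic route as the paper: specialize Theorem \ref{thm:mainresult} with $\alpha=\beta=\gamma=0$ and $\p(a)=a^t$. The paper's own proof is exactly that one line and nothing more. What you add, however, is genuinely valuable and goes beyond what the paper does. First, you actually verify that $\p(a)=a^t$ satisfies conditions (i)--(iv) of a weak distance altering function and you flag the degenerate case $t=0$ (where $\p$ is constant, hence not injective, so condition (ii) fails and the hypothesis of the corollary is anyway unsatisfiable); the paper is silent on both points. Second, and more importantly, you correctly identify that the corollary's hypothesis $0\le\mu<1$ does not imply the theorem's requirement $\mu<\frac1s$ when $s>1$, so the corollary as stated is \emph{not} a direct specialization of Theorem \ref{thm:mainresult} -- the constant $\mu$ in the corollary is precisely the coefficient of $\p(d(x,y))$ in the theorem, and the bound $\mu<\frac1s$ is genuinely used in the $k=2$ step of the theorem's proof (to divide by $1-\mu s$). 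The paper's one-line proof silently ignores this mismatch. Your repair -- pass to an iterate $T^N$ with $\mu^N<\frac1s$, obtain the unique fixed point $u$ of $T^N$, and note that $Tu$ is again a fixed point of $T^N$ so $Tu=u$, with uniqueness for $T$ inherited from uniqueness for $T^N$ -- is standard and correct, and it is what is actually needed to make the corollary true in the generality claimed. In short: same skeleton as the paper, but your version closes a real gap that the paper leaves open.
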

\begin{proof}
Taking $\gamma = \alpha = \beta = 0$ in Theorem \ref{thm:mainresult} yields the desired result.
\end{proof}

\begin{corollary}
Let $E = \R$, $P = [0, \infty)$. If $(X,d)$ is a cone rectangular metric space and $T:X \rightarrow X$ satsifies 
$d(Tx,Ty) \le \alpha d(Tx,x) + \beta d(Ty,y) + \mu d(x,y),$ where $\alpha, \beta, \mu \ge 0$ with $\alpha + \beta + \mu < 1$, then $T$ has a unique fixed point.
\end{corollary}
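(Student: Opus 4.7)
The plan is to deduce this corollary as a direct specialization of Theorem \ref{thm:mainresult}. By Remark 2, a cone rectangular metric space is a generalized cone rectangular metric space with weight $s = 1$, and (as in the two preceding corollaries) weak completeness is understood as a standing hypothesis on $(X,d)$. Setting $\p(a) = a^t$, $\gamma = 0$, and retaining $\mu, \alpha, \beta$ from the hypothesis, the contractivity condition of the corollary becomes exactly inequality (\ref{eq:condnond}) of the theorem.

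The bulk of the work will be verifying that $\p(a) = a^t$, for $t \in (0, 1]$, is a weak distance altering function on $P = [0, \infty)$. Subadditivity $(x+y)^t \le x^t + y^t$ follows from the concavity of $a \mapsto a^t$ (after dividing through by $(x+y)^t$, it reduces to $u^t + (1-u)^t \ge 1$ for $u \in [0,1]$). Monotonicity in both directions is immediate, since $a \mapsto a^t$ is strictly increasing on $[0,\infty)$. Property (iii) reduces to $\eta^t \ge \eta$ for $\eta \in [0,1]$ and $t \in (0,1]$, which is elementary. Finally, since $P = [0, \infty) \subset \R$ is normal and $\p$ is continuous at $0$, property (iv) follows from Remark 1.

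It then remains only to check the numerical constraints of Theorem \ref{thm:mainresult}: $\mu + \alpha + \beta < 1$ is assumed, $\mu < 1 = \tfrac{1}{s}$ follows from $\mu \le \mu + \alpha + \beta < 1$, and $\mu + \gamma = \mu \in [0, 1)$ holds for the same reason. Theorem \ref{thm:mainresult} then produces a unique fixed point of $T$. I anticipate no real obstacle: the argument is pure bookkeeping together with the routine verification of the four axioms for $a \mapsto a^t$. The only minor subtlety is the degenerate $t = 0$ case, where $\p$ is constant and the monotonicity \emph{iff} fails; I would handle this by restricting attention to $t \in (0, 1]$, as the $t = 0$ hypothesis would force $1 \le \alpha + \beta + \mu$, contradicting $\alpha + \beta + \mu < 1$ whenever $Tx \neq Ty$.
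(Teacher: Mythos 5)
Your proposal is correct and is essentially the paper's own proof, which simply invokes Theorem \ref{thm:mainresult} with $s=1$, $\gamma=0$, and $\p(a)=a^t$; the paper relies on having already listed $a \mapsto a^{t}$ as an example of a weak distance altering function, whereas you verify the four axioms explicitly (a worthwhile addition, along with your note on the degenerate $t=0$ case).
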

\begin{proof}
Taking $s = 1$, $\gamma = 0$ in Theorem \ref{thm:mainresult} yields the desired result.
\end{proof}

\end{document}